\newtheorem{thm}{Theorem}[section]
\newtheorem{corollary}[thm]{Corollary}
\newtheorem{lemma}[thm]{Lemma}
\newtheorem{proposition}[thm]{Proposition}
\theoremstyle{definition}
\newtheorem{definition}[thm]{Definition}
\newtheorem{example}[thm]{Example}
\theoremstyle{remark}
\newcommand{\bc}{\color{blue}}
\newcommand{\bs}{\bigskip}
\newcommand{\st}{\star}
\newcommand{\mc}{\mathcal}
\DeclareMathOperator{\htt}{ht}
\DeclareMathOperator{\Max}{Max}
\begin{document}

\title[$\st$-reductions of ideals and P$v$MDs]{\Large {$\st$-reductions of ideals and Pr\"ufer $v$-multiplication domains}}

\thanks{$^{(\star)}$ Supported by King Fahd University of Petroleum \& Minerals under DSR Research Grant \#: RG1328.}

\author[E. Houston]{E. Houston}
\address{Department of Mathematics and Statistics, University of North Carolina at Charlotte, Charlotte, NC 28223, USA}
\email{eghousto@uncc.edu}

\author[S. Kabbaj]{S. Kabbaj $^{(\star)}$}
\address{Department of Mathematics and Statistics, King Fahd University of Petroleum \& Minerals, Dhahran 31261, KSA}
\email{kabbaj@kfupm.edu.sa}

\author[A. Mimouni]{A. Mimouni $^{(\star)}$}
\address{Department of Mathematics and Statistics, King Fahd University of Petroleum \& Minerals, Dhahran 31261, KSA}
\email{amimouni@kfupm.edu.sa}

\date{\today}

\subjclass[2010]{13A15, 13A18, 13F05, 13G05, 13C20}

%%%%%%%%%%%%%%%%%%%%%%%%%%%%%%%%%%%%%%%%%%%%%%%%%%%%%%
%%%%%%%%%%%%%%%%%%%%%%%%%%%%%%%%%%%%%%%%%%%%%%%%%%%%%%
%%%%%%%%%%%%%%%%%%%%%%       ABSTRACT          %%%%%%%%%%%%%%%%%%%%%
%%%%%%%%%%%%%%%%%%%%%%%%%%%%%%%%%%%%%%%%%%%%%%%%%%%%%%
%%%%%%%%%%%%%%%%%%%%%%%%%%%%%%%%%%%%%%%%%%%%%%%%%%%%%%

\begin{abstract}
Let $R$ be a commutative ring and $I$ an ideal of $R$. An ideal $J\subseteq I$ is a reduction of $I$ if $JI^{n}=I^{n+1}$ for some positive integer $n$. The ring $R$ has the (finite) basic ideal property if (finitely generated) ideals  of $R$ do not have proper reductions. Hays characterized (one-dimensional) Pr\"ufer domains as domains with the finite basic ideal property (basic ideal property).  We extend Hays' results to Pr\"ufer $v$-multiplication domains by replacing ``basic'' with ``$w$-basic,'' where $w$ is a particular star operation.  We also investigate relations among $\st$-basic properties for certain star operations $\st$.
\end{abstract}
\maketitle

%%%%%%%%%%%%%%%%%%%%%%%%%%%%%%%%%%%%%%%%%%%%%%%%%%%%%%%
%%%%%%%%%%%%%%%%%%%%%%%%%%%%%%%%%%%%%%%%%%%%%%%%%%%%%%
%%%%%%%%%%%%%%%%%%%               INTRODUCTION          %%%%%%%%%%%%%%%%%%%
%%%%%%%%%%%%%%%%%%%%%%%%%%%%%%%%%%%%%%%%%%%%%%%%%%%%%%%
%%%%%%%%%%%%%%%%%%%%%%%%%%%%%%%%%%%%%%%%%%%%%%%%%%%%%%%

\section*{introduction}

Throughout, all rings considered are commutative with identity. Let $R$ be a ring and $I$ an ideal of $R$. An ideal $J\subseteq I$ is a \emph{reduction} of $I$ if $JI^{n}=I^{n+1}$ for some positive integer $n$  \cite{NR}. An ideal that has no reduction other than itself is called a \emph{basic} ideal \cite{H1}. The notion of reduction was introduced by Northcott and Rees, who stated: ``First, it defines a relationship between two ideals which is preserved under homomorphisms and ring extensions; secondly, what we may term the reduction process gets rid of superfluous elements of an ideal without disturbing the algebraic multiplicities associated with it" \cite{NR}. For both early and recent developments on reduction theory, we refer the reader to \cite{HRR,H1,H2,HS2,NR,RS,SK,Wang}.

In \cite{H1,H2}, Hays investigated reductions of ideals in commutative rings with a particular focus on Pr\"ufer domains. He studied the notion of basic ideal and examined domains subject to the basic ideal property (i.e., every ideal is basic). This class is shown to be strictly contained in the class of Pr\"ufer domains (domains in which every nonzero finitely generated ideal is invertible); and  a new characterization for Pr\"ufer domains is provided; namely, a domain is Pr\"ufer if and only if it has the finite basic ideal property (i.e., every finitely generated ideal is basic) \cite[Theorem 6.5]{H1}. The second main result of these two papers characterizes domains with the (full) basic ideal property as one-dimensional Pr\"ufer domains (\cite[Theorem 6.1]{H1} combined with \cite[Theorem 10]{H2}). Our primary goal is to extend Hays' results to Pr\"ufer $v$-multiplication domains (P$v$MDs).

Let $R$ be a domain and $I$ a nonzero fractional ideal of $R$. The $v$- and $t$-closures of $I$ are defined, respectively, by $I_v:=(I^{-1})^{-1}$ and $I_t:=\cup J_v$, where $J$ ranges over the set of finitely generated subideals of $I$. Recall that $I$ is a $t$-ideal if $I_t=I$ and a $t$-finite (or $v$-finite) ideal if there exists a finitely generated fractional ideal $J$ of $R$ such that $I=J_{t}=J_{v}$; and $R$ is called a Pr\"ufer $v$-multiplication domain (P$v$MD) if the set of its $t$-finite $t$-ideals forms a group under ideal $t$-multiplication ($(I,J)\mapsto (IJ)_t$). A useful characterization is that $R$ is a P$v$MD if and only if each localization at a maximal $t$-ideal is a valuation domain \cite[Theorem 5]{gr}.  The class of P$v$MDs strictly contains the classes of factorial and Pr\"ufer domains. The $t$-operation is nowadays a cornerstone of multiplicative ideal theory and has been investigated thoroughly by many commutative algebraists since the 1980's.

For the convenience of the reader, the following figure displays a diagram of implications summarizing the relations among many well-studied classes of domains, putting P$v$MDs in perspective.
In the diagram, classes on top become the classes directly underneath by means of replacing the definitions with a corresponding $t$-version.  For example, a GCD-domain is a domain in which $I_t$ is principal for each nonzero finitely generated ideal $I$, and a P$v$MD is a domain in which each nonzero finitely generated ideal is $t$-invertible.

%%%%%%%%%%%%%%%%%%%%%%%%%%%%%%%%%%%%%%%%%%%%%%%%%%%%
%%%%%%%%%%%%%%%%%              FIGURE           %%%%%%%%%%%%%%%%%%%%%%%
%%%%%%%%%%%%%%%%%%%%%%%%%%%%%%%%%%%%%%%%%%%%%%%%%%%%

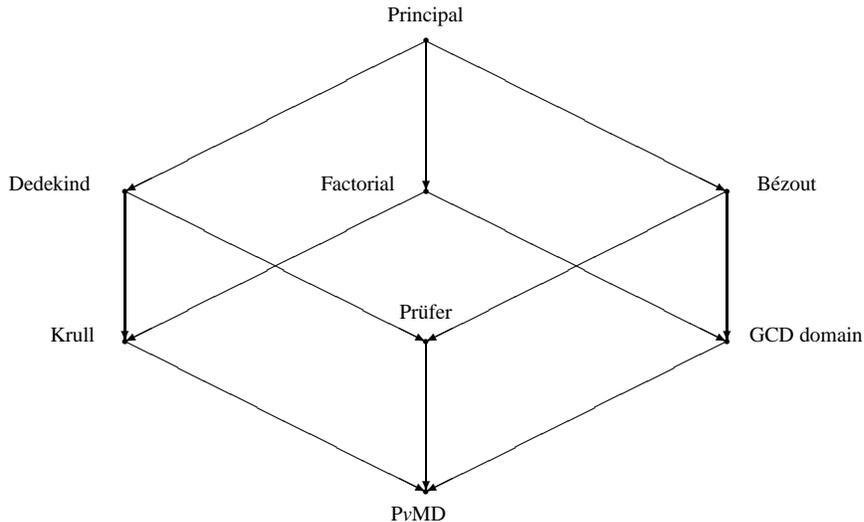
\begin{figure}[h!]
\centering
\[\setlength{\unitlength}{1mm}
\begin{picture}(80,65)(0,-90)
%%%%%%%%%%%%
\put(40,-30){\vector(0,-1){20}}
\put(40,-30){\vector(2,-1){40}}
\put(40,-30){\vector(-2,-1){40}}
\put(0,-50){\vector(0,-1){20}}
\put(0,-50){\vector(2,-1){40}}
\put(80,-50){\vector(0,-1){20}}
\put(80,-50){\vector(-2,-1){40}}
\put(40,-50){\vector(2,-1){40}}
\put(40,-50){\vector(-2,-1){40}}
\put(0,-70){\vector(2,-1){40}}
\put(80,-70){\vector(-2,-1){40}}
\put(40,-70){\vector(0,-1){20}}

\put(40,-30){\circle*{.7}} \put(40,-28){\makebox(0,0)[b]{\footnotesize Principal}}
\put(0,-50){\circle*{.7}} \put(-10,-50){\makebox(0,0)[b]{\footnotesize Dedekind}}
\put(40,-50){\circle*{.7}}\put(31,-50){\makebox(0,0)[b]{\footnotesize Factorial}}
\put(80,-50){\circle*{.7}} \put(88,-50){\makebox(0,0)[b]{\footnotesize B\'ezout}}
\put(0,-70){\circle*{.7}}\put(-7,-70){\makebox(0,0)[b]{\footnotesize Krull}}
\put(80,-70){\circle*{.7}}\put(83,-69){\makebox(0,0)[l]{\footnotesize GCD domain}}
\put(40,-70){\circle*{.7}}\put(40,-67){\makebox(0,0)[b]{\footnotesize Pr\"ufer}}
 \put(40,-90){\circle*{.7}} \put(39,-92){\makebox(0,0)[t]{\footnotesize P$v$MD}}

\end{picture}\]
\caption{P$v$MDs in perspective}\label{D1}
\end{figure}

The $t$- and $v$-operations are examples of star operations (defined below).  We also require the $w$-operation: for a nonzero ideal fractional $I$ of a domain $R$, $I_w=\bigcup (I:J)$, where the union is taken over all finitely generated ideals $J$ of $R$ that satisfy $J_v=R$; equivalently, $I_w=\bigcap IR_M$, where the intersection is taken over the set of maximal $t$-ideals of $R$. It follows that for each $I$ and maximal $t$-ideal $M$, we have $I_wR_M=IR_M$. (This can be done in greater generality--see \cite{ac}.)  In the diagram above, one can replace ``$t$'' by ``$w$'' to go from top to bottom.

In Section~\ref{s:starbasic} we discuss the notion of $\st$-basic ideals and prove that a domain with the finite $\st$-basic ideal property ($\st$-basic ideal property) must be integrally closed (completely integrally closed).  We also observe that a domain has the $v$-basic ideal property if and only if it is completely integrally closed.  Section~\ref{s:characterizations} is devoted to generalizing Hays' results; we show that a domain has the finite $w$-basic ideal property ($w$-basic ideal property) if and only if it is a P$v$MD (of $t$-dimension one).  In Section~\ref{s:examples}, we present a diagram of implications among domains having various $\st$-basic properties and give examples showing that most of the implications are not reversible.  For example, a domain with the $w$-basic ideal property must also have the $t$-basic ideal property and a $v$-domain must have the finite $v$-basic ideal property, but neither implication is reversible.

Notation is standard, as in \cite{g}.  In particluar, for a domain $D$ with quotient field $K$ and submodules $A,B$ of $K$, we use $(A:B)$ to denote the $D$-module $\{x \in K \mid xB \subseteq A\}$. \bs

%%%%%%%%%%%%%%%%%%%%%%%%%%%%%%%%%%%%%%%%%%%%%%%%%%%%%%%
%%%%%%%%%%%%%%%%%%%%%%%%%%%%%%%%%%%%%%%%%%%%%%%%%%%%%%%
%%%%%%%%%%%%%%%%%                      SECTION  I          %%%%%%%%%%%%%%%%%%%%%%
%%%%%%%%%%%%%%%%%%%%%%%%%%%%%%%%%%%%%%%%%%%%%%%%%%%%%%%
%%%%%%%%%%%%%%%%%%%%%%%%%%%%%%%%%%%%%%%%%%%%%%%%%%%%%%%

\section{$\st$-basic ideals} \label{s:starbasic}

Let $R$ be a domain with quotient field $K$, and let $\mc F(R)$ denote the set of nonzero fractional ideals of $R$.  A map $\st: \mc F(R)  \to \mc F(R)$, $I \mapsto I^{\st}$, is said to be a \emph{star operation on $R$} if the following conditions hold for every nonzero $a \in K$ and $I,J \in \mc F(R)$: (1) $(aI)^{\st} = aI^{\st}$ and $R^{\st}=R$; (2) $I \subseteq I^{\st}$ and $I \subseteq J$ implies $I^{\st} \subseteq J^{\st}$; and (3) $I^{\st \st}=I^{\st}$. It is common to denote the trivial star operation ($I \mapsto I$) by ``$d$.''

\begin{definition}
Let $R$ be an integral domain and $\star$ a star operation on $R$. Let $I$ be a nonzero ideal of $R$.
\begin{enumerate}
\item An ideal $J\subseteq I$ is a \emph{$\star$-reduction} of $I$ if $(JI^{n})^{\star}=(I^{n+1})^{\star}$ for some integer $n\geq0$. The ideal $J$ is a \emph{trivial $\star$-reduction} of $I$ if $J^{\star}=I^{\star}$.
\item $I$ is \emph{$\star$-basic} if it has no $\star$-reduction other than the trivial $\star$-reduction(s).

\item $R$ has the $\star$-basic ideal property if every nonzero ideal (or, equivalently, every $\star$-ideal) of $R$ is $\star$-basic.

\item $R$ has the finite $\star$-basic ideal property if every nonzero finitely generated ideal (or, equivalently, every $\star$-finite ideal) of $R$ is $\star$-basic.
\end{enumerate}
\end{definition}

It is clear that $\st$-reductions can be extended to fractional ideals; in particular, if $R$ has the $\st$-basic ideal property, then every nonzero fractional ideal of $R$ is $\st$-basic.

It is easy to see that if $\st_1 \le \st_2$ are star operations on a domain $R$ (meaning that $I^{\st_1} \subseteq I^{\st_2}$ for each $I \in \mc F(R)$), then each $\st_1$ 
reduction of an ideal is also a $\st_2$-reduction.  The converse is false.  In particular, a $t$-reduction may not be a ($d$-)reduction. For a very simple example, 
let $R=k[x,y]$ be a polynomial ring in two indeterminates over a field $k$, and let $M=(x,y)$.  Then $M$ is basic, i.e., $M$ has no reductions other than itself \cite[Theorem 2.3]{H1}. 
On the other hand, $M_t=R$ (see, e.g., \cite[Exercise 1, p. 102]{Ka}), from which it follows that any power of $M$ is a (trivial) $t$-reduction of $M$. (We give a ``better'' example following Proposition~\ref{p:cic} below.)

\begin{lemma} \label{l:invertible}
In an integral domain $R$, $\star$-invertible ideals and $\star$-idempotent ideals are $\star$-basic.
\end{lemma}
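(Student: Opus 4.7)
The plan is to verify the two cases separately. In both, the aim is to show that if $K \subseteq I$ satisfies $(KI^n)^\star = (I^{n+1})^\star$ for some integer $n \geq 0$, then $K^\star = I^\star$. The case $n=0$ is immediate, so assume $n \geq 1$. Throughout, the key tool is the standard star-operation identity $(AB)^\star = (A^\star B)^\star = (AB^\star)^\star$.

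For the $\star$-invertible case, fix $J \in \mc F(R)$ with $(IJ)^\star = R$. A short induction, based on $(I^{n+1}J^{n+1})^\star = (I^n J^n \cdot (IJ)^\star)^\star = (I^n J^n)^\star$, shows that $(I^n J^n)^\star = R$ for every $n \geq 1$. Now multiply the reduction hypothesis $(KI^n)^\star = (I^{n+1})^\star$ by $J^n$ and apply $\star$: using $(AB)^\star = (A^\star B)^\star$, this yields $(KI^n J^n)^\star = (I^{n+1} J^n)^\star$. The other form of the identity then collapses each side, giving $(KI^n J^n)^\star = (K (I^n J^n)^\star)^\star = (KR)^\star = K^\star$, and similarly the right side equals $I^\star$. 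Therefore $K^\star = I^\star$.

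For the $\star$-idempotent case, meaning $(I^2)^\star = I^\star$, a straightforward induction yields $(I^m)^\star = I^\star$ for every $m \geq 1$, so in particular $(I^{n+1})^\star = I^\star$. On the other hand, $I^n \subseteq R$ forces $KI^n \subseteq K$, hence $(KI^n)^\star \subseteq K^\star$. Combining with the hypothesis, $I^\star = (I^{n+1})^\star = (KI^n)^\star \subseteq K^\star \subseteq I^\star$, so $K^\star = I^\star$. The only real concern in either argument is careful bookkeeping with the star identity, in particular avoiding the false assertion $(AB)^\star = A^\star B^\star$; once that is handled, the idempotent half is essentially one line and the invertible half is a clean calculation.
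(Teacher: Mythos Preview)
Your proof is correct and follows essentially the same approach as the paper's: for the $\star$-invertible case both multiply by the $n$th power of an inverse and take $\star$-closures, and for the $\star$-idempotent case both use $(I^{n+1})^{\star}=I^{\star}$ together with $(KI^n)^{\star}\subseteq K^{\star}$ to sandwich $K^{\star}$ between two copies of $I^{\star}$. You have simply spelled out in more detail the bookkeeping with the identity $(AB)^{\star}=(A^{\star}B)^{\star}$ that the paper leaves implicit.
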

\begin{proof}
Let $J\subseteq I$ be a $\st$-reduction  of the ideal $I$ of $R$, so that $(JI^{n})^{\star}=(I^{n+1})^{\star}$ for some positive integer $n$. If $I$ is $\st$-invertible, then multiplication by $(I^{-1})^n$ and taking $\st$-closures immediately yields $J^{\star}=I^{\star}$. Next, assume that $(I^{2})^{\star}=I^{\star}$. Then $I^{\star}=(I^{n+1})^{\star}=(JI^{n})^{\star}\subseteq J^{\star}\subseteq I^{\star}$ so that, again, $J^{\star}=I^{\star}$, as desired.
\end{proof}

\begin{lemma} \label{l:starbasic} {\rm{(cf. \cite[Lemma 6.4]{H1})}} Let $\st$ be a star operation on a domain $R$. If $R$ has the finite $\st$-basic ideal property, then $R$ is integrally closed.
\end{lemma}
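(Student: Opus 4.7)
The plan is to adapt Hays' original proof of the $d$-case \cite[Lemma 6.4]{H1} to an arbitrary star operation. Let $K$ denote the quotient field of $R$, and suppose $u = a/b \in K$ is integral over $R$, say $u^n + c_{n-1}u^{n-1} + \cdots + c_0 = 0$ with each $c_i \in R$ and $b \ne 0$. I aim to conclude that $u \in R$ by producing a $\st$-reduction of a two-generated ideal and invoking the finite $\st$-basic property exactly once.

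Set $I = (a,b)$. Multiplying the integral equation by $b^n$ expresses $a^n$ as an $R$-linear combination of the monomials $a^i b^{n-i}$ with $i \le n-1$; in particular $a^n \in bI^{n-1}$. Since every other generator $a^i b^{n-i}$ (with $0 \le i \le n-1$) of $I^n$ visibly lies in $bI^{n-1}$, this yields $I^n \subseteq bI^{n-1}$, while the reverse inclusion is immediate from $b \in I$. Hence $(b)I^{n-1} = I^n$, so $(b) \subseteq I$ is a classical (hence $\st$-) reduction of $I$.

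Because $I$ is finitely generated, the hypothesis forces $I$ to be $\st$-basic, whence $(b)^{\st} = I^{\st}$. But $(b)^{\st} = bR$ by the first axiom of a star operation, so $a \in I \subseteq I^{\st} = bR$, and therefore $u = a/b \in R$. This establishes that $R$ is integrally closed.

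The argument is essentially the classical $d$-operation proof of Hays, and the only place where the star operation enters is the very last step, where one invokes $(bR)^{\st} = bR$. I anticipate no serious obstacle here: the identity $I^n = bI^{n-1}$ is a direct computation from the integral equation, and the reduction it produces is already a $d$-reduction and therefore automatically qualifies as a $\st$-reduction for every star operation $\st$.
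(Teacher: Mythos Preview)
Your proof is correct and follows essentially the same approach as the paper: both show that $(b)$ is a (classical, hence $\st$-) reduction of $(a,b)$ via the integral equation and then apply the finite $\st$-basic hypothesis together with $(b)^{\st}=bR$ to conclude $a\in bR$. The paper merely cites Hays' argument for the reduction step, whereas you have written it out in full.
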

\begin{proof} Let $x,y \in R$ be such that $x/y$ is integral over $R$. As in the proof of \cite[Lemma 6.4]{H1}, $(y)$ is a reduction of $(x,y)$.  We then have $x \in (x,y)^{\st}=(y)^{\st}=(y)$, whence $x/y \in R$.
\end{proof}

Recall that a domain $R$ is said to be \emph{completely integrally closed} if every nonzero ideal of $R$ is $v$-invertible.

\begin{proposition} \label{p:cic}
Let $\star$ be a star operation on an integral domain $R$.
\begin{enumerate}
\item If $R$ has the $\star$-basic ideal property, then $R$ is completely integrally closed.
\item $R$ has the $v$-basic ideal property if and only if $R$ is completely integrally closed.
\end{enumerate}
\end{proposition}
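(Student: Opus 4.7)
My plan is to prove (1) by using the standard characterization that $R$ is completely integrally closed if and only if every element $x$ of the quotient field $K$ which is almost integral over $R$ (i.e., $cx^n \in R$ for all $n \geq 0$, for some fixed $0 \neq c \in R$) already lies in $R$. For (2), the forward direction will be immediate from (1), and the converse will follow quickly from the definition of complete integral closure combined with Lemma~\ref{l:invertible}.

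For (1), I will assume $R$ has the $\star$-basic ideal property and take $x \in K^{*}$ almost integral over $R$ with witness $0 \neq c \in R$. The key construction is the ideal
\[ I = (c, cx, cx^{2}, \ldots), \]
which lies in $R$ because each $cx^{n} \in R$. I will then show directly, by comparing monomial generators, that $(cR)\, I^{n} = I^{n+1}$ as ordinary ideals for every $n \geq 0$; both sides equal the ideal generated by $\{c^{n+1}x^{j} : j \geq 0\}$. Hence $cR \subseteq I$ is a (genuine, and in particular $\star$-) reduction of $I$. Since $I$ is $\star$-basic, this forces $(cR)^{\star} = I^{\star}$; but $(cR)^{\star} = cR$ because every principal ideal is $\star$-closed. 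Consequently $I \subseteq cR$, so $cx \in cR$ and $x \in R$, establishing complete integral closure.

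For (2), the ``only if'' direction follows at once from part (1), since the $v$-operation is a star operation. For the ``if'' direction, suppose $R$ is completely integrally closed. By the definition recalled before the statement, every nonzero ideal $I$ of $R$ is $v$-invertible, and Lemma~\ref{l:invertible} says that $v$-invertible ideals are $v$-basic. Hence $R$ has the $v$-basic ideal property.

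The only real obstacle is finding the right ideal in part (1); once one thinks to form $I = cR[x] \cap R = (c, cx, cx^{2}, \ldots)$ and notice that multiplication by $c$ merely reindexes the exponents, the identity $(cR)\, I^{n} = I^{n+1}$ is transparent and everything else (the passage through any star operation, the triviality $(cR)^{\star} = cR$, and the reduction to $x \in R$) is routine.
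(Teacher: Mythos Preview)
Your proof is correct. Part (2) is handled exactly as in the paper. For part (1), however, you take a genuinely different route from the paper's argument.

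The paper works directly with the definition of complete integral closure stated just before the proposition, namely that every nonzero ideal is $v$-invertible. Given a nonzero ideal $I$, it sets $J:=II^{-1}$, uses the well-known fact that $J^{-1}=(J:J)$ is a ring, and then for $0\neq a\in J$ puts $A:=aJ^{-1}$ and $B:=aR$. Since $J^{-1}$ is a ring one has $BA=a^{2}J^{-1}=(aJ^{-1})^{2}=A^{2}$, so $B$ is a reduction of $A$; the $\star$-basic hypothesis gives $aJ^{-1}=aR$, hence $J^{-1}=R$ and $(II^{-1})_{v}=R$.

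You instead invoke the (equivalent, standard) element-wise characterization via almost integral elements, and build the ideal $I=(c,cx,cx^{2},\ldots)$ for which $(cR)I=I^{2}$ by a direct monomial count. Your argument is perhaps more concrete and avoids the trace-ideal fact $J^{-1}=(J:J)$; the paper's argument, on the other hand, verifies $v$-invertibility of an \emph{arbitrary} ideal without passing through the almost-integral characterization, which fits more cleanly with the definition it has just recalled. Both arguments ultimately exploit the same mechanism: producing a principal reduction of a larger ideal and then using $(cR)^{\star}=cR$ to collapse it.
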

\begin{proof}
(1) Assume $R$ has the $\star$-basic ideal property. Let $I$ be a nonzero ideal of $R$ and set $J:=II^{-1}$. It is well known that $J^{-1}=(J:J)$, and hence $J^{-1}$ is a ring.  Now, let $0\not=a\in J$ and set $A:=aJ^{-1}$ and $B:=aR$. Clearly, $A$ and $B$ are $v$-ideals of $R$ with $B\subseteq A$ and $BA=A^{2}$. That is, $B$ is a reduction (and, a fortiori, a $\star$-reduction) of $A$. By the $\star$-basic hypothesis, $aJ^{-1}=A^{\st}=B^{\st}=aR$, whence $R=J^{-1}$. Therefore, $(II^{-1})_{v}=J_{v}=R$, as desired.

(2) The ``only if" assertion is a special case of (1), and the converse is handled by Lemma~\ref{l:invertible}.
\end{proof}

Next, we give an example of $t$-ideals $I,J$ in a Noetherian domain $R$ such that $J$ is a $t$-reduction, but not a $d$-reduction, of $I$.  Since the $v$- and $t$-operations coincide in any Noetherian domain, such an $R$ cannot be (completely) integrally closed by Proposition~\ref{p:cic}.

\begin{example}\label{d:exa2} Again let $k$ be a field and $x,y$ indeterminants over $k$.  Let $T=k[x,y]=k+M$, where $M=(x,y)T$. Now let $R=k+M^2$.  Observe that $R$ is Noetherian (see, e.g., \cite{bsw}). As in the discussion preceding Lemma~\ref{l:invertible}, as an ideal of $T$, $M$ has no reductions other than itself.  In particular, $M^2$ is not a reduction of $M$ in $T$, and it follows easily that $M^2$ is not a reduction of (the fractional ideal) $M$ in $R$. However, we claim that $M^2$ is a nontrivial $t$-reduction of $M$.  To verify this, proceed as follows.  First, we have $(T:M)=T$ (as before).  It follows that $M \subseteq M^{-1}$ ($= (R:M)$) $\subseteq T$.  On the other hand, if $f \in T$ satisfies $fM \subseteq R$, then, writing $f=a+m$ with $a \in k$ and $m \in M$, we immediately obtain that $aM \subseteq R$, whence $a=0$, i.e., $f \in M$.  Thus $M^{-1}=M$, whence also $M_t=M_v=M$.  However, $(R:T)=M^2$, whence $(M^2)^{-1}=((R:M):M)=(M:M)=T$ and then $(M^2)_t=(M^2)_v=(R:T)=M^2$.  A similar argument yields $(M^n)_t=M^2$ for $n \ge 2$.  Hence $M^2 =(M^3)_t=(M^2M)_t$, and therefore $J:=M^2$ is a nontrivial $t$-reduction of $I:=M$, as claimed.  (To obtain an example involving integral ideals, replace $M$ by $xM$ and $M^2$ by $xM^2$.) \qed
\end{example}

We recall that a domain $R$ is a \emph{$v$-domain} if each nonzero finitely generated ideal of $R$ is $v$-invertible.
From Lemma~\ref{l:invertible}, the following is immediate:

\begin{proposition} \label{p:vdomain}  A $v$-domain has the finite $v$-basic ideal property. \qed
\end{proposition}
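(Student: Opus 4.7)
The plan is to deduce this directly from Lemma~\ref{l:invertible}, which already establishes that every $\st$-invertible ideal is $\st$-basic. Once the definitions are unpacked, there is essentially nothing left to do.

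Concretely, let $R$ be a $v$-domain and let $I$ be a nonzero finitely generated ideal of $R$. By definition of a $v$-domain, $I$ is $v$-invertible, i.e., $(II^{-1})_v=R$. Applying Lemma~\ref{l:invertible} with the star operation $\st=v$, we conclude that $I$ is $v$-basic. Since $I$ was an arbitrary nonzero finitely generated ideal, $R$ has the finite $v$-basic ideal property.

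There is no substantive obstacle here: the content of the proposition is entirely absorbed by Lemma~\ref{l:invertible}, and the role of the ``$v$-domain'' hypothesis is merely to guarantee that the lemma's hypothesis (namely $v$-invertibility) applies to every nonzero finitely generated ideal. This is why the statement is flagged with \qed in the excerpt: it is a one-line corollary rather than an independent argument.
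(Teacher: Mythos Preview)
Your proposal is correct and matches the paper's approach exactly: the paper states that the proposition is immediate from Lemma~\ref{l:invertible} and gives no further argument. Your unpacking of the definitions is precisely the intended one-line deduction.
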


Now recall that to any star operation $\st$ on a domain $R$, we may define an associated star operation $\st_f$ by setting, for each $I \in \mc F(R)$, $I^{\st_f}=\bigcup J^{\st}$, the union being taken over all finitely generated subideals $J$ of $I$; the star operation $\st$ has \emph{finite type} if $\st=\st_f$.  Note that $v_f=t$.  If $\st$ is a finite-type star operation on a domain $R$, then minimal primes of $\st$-ideals are themselves $\st$-ideals and each $\st$-ideal is contained in a maximal $\st$-ideal.

\begin{lemma}\label{l:finitestarbasic}
Let $\st$ be a star operation of finite type on an integral domain $R$.  If $I$ is a finitely generated ideal of $R$ and $J$ is a $\st$-reduction of $I$, then there is a finitely generated ideal $K\subseteq J$ such that $K$ is a $\st$-reduction of $I$.
\end{lemma}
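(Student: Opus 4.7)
The plan is to use the finite-type hypothesis to replace the (possibly non-finitely-generated) ideal $J$ inside $JI^n$ by a finitely generated subideal, one generator of $(I^{n+1})^\st$ at a time, and then extract from the result the finitely many elements of $J$ actually needed.

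First I would note that $I^{n+1}$ is finitely generated (since $I$ is), say $I^{n+1}=(a_1,\dots,a_m)$. By hypothesis each $a_i$ lies in $(JI^n)^\st$, and since $\st$ has finite type we have
\[
(JI^n)^\st=\bigcup_{F} F^\st,
\]
where $F$ ranges over finitely generated subideals of $JI^n$. Hence for each $i$ there is a finitely generated subideal $F_i\subseteq JI^n$ with $a_i\in F_i^\st$. Setting $F:=F_1+\cdots+F_m$, $F$ is again a finitely generated subideal of $JI^n$, and $I^{n+1}\subseteq F^\st$. Combined with $F\subseteq JI^n\subseteq I^{n+1}$, this yields $F^\st=(I^{n+1})^\st$.

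Next I would extract $K$. Each generator of $F$ is a finite $R$-linear combination (in fact, an element) of $JI^n$, so it can be written as a finite sum $\sum_k b_k c_k$ with $b_k\in J$ and $c_k\in I^n$. Letting $K$ be the ideal generated by all the (finitely many) $b_k$'s occurring in expressions for a fixed finite generating set of $F$, we obtain a finitely generated ideal $K\subseteq J$ with $F\subseteq KI^n$. Combining the two inclusions
\[
F\subseteq KI^n\subseteq JI^n\subseteq I^{n+1}
\]
and applying $\st$ gives
\[
(I^{n+1})^\st=F^\st\subseteq (KI^n)^\st\subseteq (I^{n+1})^\st,
\]
so $(KI^n)^\st=(I^{n+1})^\st$ and $K$ is the desired finitely generated $\st$-reduction of $I$ contained in $J$.

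No serious obstacle is anticipated; the only point that requires care is invoking the finite-type property correctly (one must apply it to $JI^n$, not to $J$ itself, since it is the product that appears in the defining relation of a $\st$-reduction). The argument is essentially the standard ``finite-type allows truncation'' trick, adapted to reductions.
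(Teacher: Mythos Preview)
Your proof is correct and follows essentially the same route as the paper's: both use the finite generation of $I^{n+1}$ together with the finite-type hypothesis to find, for each generator of $I^{n+1}$, a finitely generated piece of $J$ that suffices, and then sum these pieces to obtain $K$. The only difference is that you spell out the intermediate step (first finding $F\subseteq JI^n$, then extracting $K$ from the generators of $F$), whereas the paper passes directly from $b_i\in (JI^n)^{\st}$ to a finitely generated $K_i\subseteq J$ with $b_i\in (K_iI^n)^{\st}$, leaving that extraction implicit.
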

\begin{proof}
Suppose that $I$ is a finitely generated ideal of $R$ and that $(JI^{n})^{\st}=(I^{n+1})^{\st}$ for some ideal $J \subseteq I$ and some positive integer $n$. Suppose that $I^{n+1}$ is generated by $b_{1}, ..., b_{r}$ in $R$. Since  $b_{i}\in (JI^{n})^{\st}$, there is a finitely generated subideal $K_i$ of $J$ such that $b_i \in (K_iI^n)^{\st}$.  For $K = \sum_{i=1}^r K_i$, we then have $I^{n+1} \subseteq (KI^n)^{\st}$, as desired.
\end{proof}

\begin{proposition} \label{p:vimpliest} If a domain $R$ has the finite $\st$-basic ideal property, then $R$ also has the finite $\st_f$-basic ideal property.  In particular, if $R$ has the finite $v$-basic ideal property, then $R$ also has the finite $t$-basic ideal property.
\end{proposition}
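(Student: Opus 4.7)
The plan is to take an arbitrary finitely generated ideal $I$ of $R$ and a $\st_f$-reduction $J\subseteq I$, and conclude that $J^{\st_f}=I^{\st_f}$. The natural route is first to reduce to the case where $J$ itself is finitely generated, then transfer the reduction across the comparison $\st_f\le \st$, apply the finite $\st$-basic hypothesis, and finally translate the resulting equality of $\st$-closures into an equality of $\st_f$-closures.

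First I would invoke Lemma~\ref{l:finitestarbasic} applied to the finite-type star operation $\st_f$: since $I$ is finitely generated and $J$ is a $\st_f$-reduction of $I$, there is a finitely generated ideal $K\subseteq J$ which is still a $\st_f$-reduction of $I$. Replacing $J$ by $K$, I may assume $J$ is finitely generated. Next I would use the general fact, recorded just before Lemma~\ref{l:invertible}, that if $\st_1\le \st_2$ then every $\st_1$-reduction is a $\st_2$-reduction; applied to $\st_f\le \st$ this shows $J$ is a $\st$-reduction of $I$. The finite $\st$-basic hypothesis, applied to the finitely generated ideal $I$, then yields $J^{\st}=I^{\st}$.

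The last step is the observation that for a finitely generated ideal $A$, one has $A^{\st_f}=A^{\st}$: the inclusion $A^{\st_f}\subseteq A^{\st}$ comes from $\st_f\le \st$, while the reverse inclusion follows from the definition of $\st_f$ by taking $A$ itself as a finitely generated subideal of $A$. Applying this to both $I$ and $J$ gives
\[
J^{\st_f}=J^{\st}=I^{\st}=I^{\st_f},
\]
so $J$ is a trivial $\st_f$-reduction of $I$, proving the finite $\st_f$-basic ideal property. The ``in particular'' clause is then immediate, since by definition $v_f=t$.

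I do not anticipate a serious obstacle here: Lemma~\ref{l:finitestarbasic} and the monotonicity of reductions in the star operation do all of the work. The only point that needs to be stated carefully, rather than calculated, is the coincidence of $\st_f$ and $\st$ on finitely generated ideals, which is what allows the equality of $\st$-closures to be promoted to an equality of $\st_f$-closures.
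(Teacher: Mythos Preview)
Your proposal is correct and follows essentially the same route as the paper's proof: reduce to finitely generated $J$ via Lemma~\ref{l:finitestarbasic}, pass from a $\st_f$-reduction to a $\st$-reduction, apply the finite $\st$-basic hypothesis, and use that $\st$ and $\st_f$ agree on finitely generated ideals to conclude $J^{\st_f}=I^{\st_f}$. You are simply more explicit than the paper about why Lemma~\ref{l:finitestarbasic} applies (namely, to the finite-type operation $\st_f$) and about the coincidence $A^{\st_f}=A^{\st}$ for finitely generated $A$.
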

\begin{proof} Let $R$ be a domain with the $\st$-basic ideal property.  Let $I$ be a finitely generated ideal of $R$, and let $J$ be a $\st_f$-reduction of $I$.  By Lemma~\ref{l:finitestarbasic} we may assume that $J$ is finitely generated.  Since $J$ is also a $\st$-reduction of $I$, we have $J^{\st_f}=J^{\st}=I^{\st}=I^{\st_f}$.  Hence $R$ has the $\st_f$-basic ideal property.
\end{proof}

\begin{corollary} \label{c:vdomain} A $v$-domain has the finite $t$-basic ideal property. \qed
\end{corollary}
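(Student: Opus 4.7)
The plan is to string together the two results immediately preceding the corollary. By Proposition~\ref{p:vdomain}, a $v$-domain $R$ satisfies the finite $v$-basic ideal property, since any finitely generated ideal is $v$-invertible and Lemma~\ref{l:invertible} says $\star$-invertible ideals are $\star$-basic.

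Next I invoke Proposition~\ref{p:vimpliest} with $\star = v$. That proposition asserts that the finite $\star$-basic ideal property passes to the associated finite-type star operation $\star_f$. Since $v_f = t$ (recorded explicitly in the paper just before Lemma~\ref{l:finitestarbasic}), the finite $v$-basic ideal property implies the finite $t$-basic ideal property. Composing these two implications yields the corollary.

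There is essentially no obstacle: the statement is a direct concatenation of Proposition~\ref{p:vdomain} and Proposition~\ref{p:vimpliest}, together with the identification $v_f = t$. The only thing worth double-checking is that the hypotheses really do match up, namely that what Proposition~\ref{p:vimpliest} needs as input (the finite $v$-basic ideal property) is exactly what Proposition~\ref{p:vdomain} delivers, and that the conclusion it produces (the finite $v_f$-basic ideal property) is literally the finite $t$-basic ideal property. Both of these are transparent, so a one-line proof suffices.
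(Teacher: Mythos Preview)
Your proposal is correct and matches the paper's intended argument: the corollary is marked with \qed and is meant to follow immediately by combining Proposition~\ref{p:vdomain} with Proposition~\ref{p:vimpliest} (using $v_f = t$), exactly as you describe.
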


%%%%%%%%%%%%%%%%%%%%%%%%%%%%%%%%%%%%%%%%%%%%%%%%%%%%%%%
%%%%%%%%%%%%%%%%%%%%%%%%%%%%%%%%%%%%%%%%%%%%%%%%%%%%%%%
%%%%%%%%%%%%%%%%%%%%                SECTION II                %%%%%%%%%%%%%%%%%%%
%%%%%%%%%%%%%%%%%%%%%%%%%%%%%%%%%%%%%%%%%%%%%%%%%%%%%%%
%%%%%%%%%%%%%%%%%%%%%%%%%%%%%%%%%%%%%%%%%%%%%%%%%%%%%%%

\section{Characterizations} \label{s:characterizations}

We begin with an analogue of Hays' first result that a domain is a Pr\"ufer domain if and only if it has the finite basic ideal property.  We shall need a result of Kang \cite[Theorem 3.5]{Kg2} that characterizes P$v$MDs as integrally closed domains in which the $t$- and $w$-operations coincide.  We denote the set of maximal $t$-ideals of a domain $R$ by $\Max_t(R)$.

\begin{thm} \label{t:finitewbasic} {\rm{(cf. \cite[Theorem 6.5]{H1})}} A domain $R$ is a P$v$MD if and only if it has the finite $w$-basic ideal property.
\end{thm}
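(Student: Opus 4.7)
The plan is to handle the two directions separately.

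For the forward direction, assume $R$ is a P$v$MD. By Kang's theorem (quoted just before the statement), the $t$- and $w$-operations coincide on $R$. Since every nonzero finitely generated ideal of $R$ is $t$-invertible by the very definition of a P$v$MD, it is therefore $w$-invertible, and Lemma~\ref{l:invertible} yields that every such ideal is $w$-basic.

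For the converse, assume $R$ has the finite $w$-basic ideal property. By Lemma~\ref{l:starbasic}, $R$ is integrally closed. I will show that $R_M$ is a valuation domain for every maximal $t$-ideal $M$, which by the characterization of P$v$MDs recalled in the introduction gives the conclusion. The crucial input is the identity
\[
(a^2,b^2)\,(a^2,ab,b^2) \;=\; (a,b)^4 \;=\; \bigl((a,b)^2\bigr)^2
\qquad\text{for all }a,b\in R,
\]
so $J=(a^2,b^2)$ is a reduction (taking $n=1$), and hence a $w$-reduction, of the finitely generated ideal $I^2=(a,b)^2$. The hypothesis therefore gives $J_w=(I^2)_w$, and localizing at any maximal $t$-ideal $M$ yields $(a^2,b^2)R_M=(a^2,ab,b^2)R_M$. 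In particular $ab\in(a^2,b^2)R_M$ for all $a,b\in R$, and clearing denominators extends this to all $a,b\in R_M$.

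It remains to prove that a local integrally closed domain $(S,\mathfrak{n})$ satisfying $ab\in(a^2,b^2)$ for all $a,b\in S$ must be a valuation domain. Given nonzero $a,b\in S$, write $ab=a^2x+b^2y$; then $c=a/b$ satisfies $xT^2-T+y=0$, so $xc$ is a root of the monic polynomial $U^2-U+xy$ and hence lies in $S$ by integral closure. This gives $xa=br_1$ for some $r_1\in S$, and symmetrically $yb=ar_2$. Substituting into $ab=a(xa)+b(yb)$ yields $ab=ab(r_1+r_2)$, so $r_1+r_2=1$. Since $S$ is local, one of $r_1,r_2$ is a unit, which forces $a\mid b$ or $b\mid a$, as needed.

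The main conceptual hurdle is spotting the reduction identity $(a^2,b^2)(a,b)^2=(a,b)^4$: it is the precise algebraic hook that converts the $w$-basic hypothesis into the pointwise property $ab\in(a^2,b^2)$, which, together with integral closure, is exactly what is needed to force each $R_M$ to be a valuation domain.
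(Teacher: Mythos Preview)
Your proof is correct and follows essentially the same approach as the paper: both directions hinge on the same ideas (Kang's $t=w$ result plus invertibility for the forward direction; integral closure plus the reduction $(a^2,b^2)\subseteq (a,b)^2$ and localization at maximal $t$-ideals for the converse). The only difference is cosmetic: the paper cites \cite[Theorem~24.3(4)]{g} for the step ``$(a^2,b^2)R_M=(a,b)^2R_M$ implies $R_M$ is a valuation domain,'' whereas you supply a direct argument for this fact.
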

\begin{proof} If $R$ is a P$v$MD, then, as mentioned above, the $t$- and $w$-operations coincide, and $R$ has the finite $w$-basic ideal property by Corollary~\ref{c:vdomain}.

Now assume that $R$ has the finite $w$-basic ideal property.  Then $R$ is integrally closed by Lemma~\ref{l:starbasic}. Let $M \in \Max_t(R)$, and let $a,b \in M$.  Since $(a^2,b^2)$ is a reduction of $(a,b)^2$, we have $(a^2,b^2)_w=((a,b)^2)_w$ and hence (as mentioned in the introduction) $(a^2,b^2)R_M= (a,b)^2R_M$.  Thus $R_M$ is a valuation domain \cite[Theorem 24.3(4)]{g}. Therefore, $R$ is a P$v$MD.
\end{proof}

Hays proved that, in a Pr\"ufer domain, the definition of a reduction can be restricted; namely,  $J\subseteq I$ is a reduction if and only if $JI=I^{2}$ \cite[Proposition 1]{H2}. The next lemma establishes a similar property for $t$-reductions and also shows that this notion is local in the class of P$v$MDs. It is useful to note if $J$ is a $t$-reduction of an ideal $I$, then a prime $t$-ideal of $R$ contains $I$ if and only if it contains $J$.  We shall also need the fact (which follows easily from \cite[Lemma 4]{z} and is stated explicitly in \cite[Lemma 3.4]{Kg2}), that if $I$ is a nonzero ideal of a domain $R$ and $S$ is a multiplicatively closed subset of $R$, then $(I_tR_S)_{t_{R_S}}=(IR_S)_{t_{R_S}}$.
\begin{lemma}\label{l:square} Let $R$ be a P$v$MD and $J\subseteq I$ nonzero ideals of $R$. Then, the following assertions are equivalent:
\begin{enumerate}
\item $J$ is a $t$-reduction of $I$;
\item $JR_MIR_{M}=(IR_{M})^2$ for each $M\in\Max_{t}(R)$;
\item $(JI)_{t}=(I^{2})_{t}$.
\end{enumerate}
\end{lemma}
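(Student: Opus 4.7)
The plan is to prove the equivalences as the cycle $(3)\Rightarrow(1)\Rightarrow(2)\Rightarrow(3)$. The implication $(3)\Rightarrow(1)$ is immediate by taking $n=1$ in the definition of $t$-reduction, so the real work lies in $(1)\Rightarrow(2)$ and $(2)\Rightarrow(3)$.

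For $(1)\Rightarrow(2)$, suppose that $(JI^{n})_{t}=(I^{n+1})_{t}$ for some $n\geq 1$, and fix $M\in\Max_{t}(R)$. First I would extend to $R_{M}$ and apply $t_{R_{M}}$-closure, using the compatibility identity $(AR_{S})_{t_{R_{S}}}=(A_{t}R_{S})_{t_{R_{S}}}$ recalled just before the lemma, to get
\[
\bigl(JR_{M}\cdot(IR_{M})^{n}\bigr)_{t_{R_{M}}}=\bigl((IR_{M})^{n+1}\bigr)_{t_{R_{M}}}.
\]
The key observation is that the $t$-operation on a valuation domain is trivial: in $R_{M}$ every finitely generated ideal is principal, hence already $v$-closed, so every ideal coincides with its $t$-closure. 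This lets me drop the $t_{R_{M}}$ on both sides and conclude that $JR_{M}\cdot (IR_{M})^{n}=(IR_{M})^{n+1}$, i.e., $JR_{M}$ is a classical reduction of $IR_{M}$ in the Pr\"ufer (in fact, valuation) domain $R_{M}$. Hays' reduction-exponent theorem \cite[Proposition 1]{H2} then collapses the exponent and gives $JR_{M}\cdot IR_{M}=(IR_{M})^{2}$, which is (2).

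For $(2)\Rightarrow(3)$, I would use the $w$-intersection formula $A_{w}=\bigcap_{M\in\Max_{t}(R)}AR_{M}$ recalled in the introduction. Assumption (2) yields $(JI)R_{M}=JR_{M}\cdot IR_{M}=(IR_{M})^{2}=(I^{2})R_{M}$ for every $M\in\Max_{t}(R)$, and intersecting over such $M$ gives $(JI)_{w}=(I^{2})_{w}$. Finally, Kang's theorem \cite[Theorem 3.5]{Kg2}, which states that in a P$v$MD the $t$- and $w$-operations coincide, upgrades this to $(JI)_{t}=(I^{2})_{t}$, which is (3).

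The main obstacle is the crucial step in $(1)\Rightarrow(2)$: recognising that $t_{R_{M}}$ is trivial on the valuation domain $R_{M}$, without which one cannot pass from the $t$-closed equality after localization to the honest equality of ideals needed to invoke Hays. Everything else is a routine weaving together of the compatibility of $t$-closure with localization, the $w$-intersection representation, and Kang's identity $t=w$ in a P$v$MD.
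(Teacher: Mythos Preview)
Your proof is correct and follows essentially the same route as the paper: the same cycle $(3)\Rightarrow(1)\Rightarrow(2)\Rightarrow(3)$, with $(1)\Rightarrow(2)$ via the localization compatibility of $t$, triviality of $t$ on the valuation domain $R_M$, and Hays' \cite[Proposition 1]{H2}, and $(2)\Rightarrow(3)$ via the $w$-intersection formula together with Kang's $t=w$ in a P$v$MD. The only cosmetic difference is that the paper invokes \cite[Theorem 3.5]{Kg2} to write the intersection directly as the $t$-closure, whereas you pass through $w$ first and then convert.
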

\begin{proof}
(1) $\Rightarrow$ (2) Assume that $J$ is a $t$-reduction of $I$, so that $(JI^n)_t=(I^{n+1})_t$ for some positive integer $n$, and let $M\in\Max_{t}(R)$. Since $R_M$ is a valuation domain, the $t$-operation is trivial on $R_M$ ($t_{R_M}=d_{R_M}$).  Using this and the remarks above, we have $$I^{n+1}R_M = ((I^{n+1})_tR_M)_{t_{R_M}} = ((JI^n)_tR_M)_{t_{R_M}}= JI^nR_M.$$ Hence $JR_M$ is a $t$-reduction of $IR_M$ in $R_M$, and so $JR_MIR_M=(IR_M)^2$ by \cite[Proposition 1]{H2}.

(2) $\Rightarrow$ (3) By \cite[Theorem 3.5]{Kg2}, we have
$$(JI)_{t}=\bigcap_{M\in\Max_{t}(R)} JIR_M=\bigcap_{M\in\Max_{t}(R)} (I^2R_M)=(I^{2})_{t}.$$

(3) $\Rightarrow$ (1) is trivial.
\end{proof}
\begin{lemma} \label{l:haysprop9} {\rm{(cf. \cite[Lemma 9]{H2})}} Let $x$ be a nonzero element of a P$v$MD $R$, let $P$ be a minimal prime of $xR$, and let $I=xR_P \cap R$.  Then \begin{enumerate}
\item $I$ is a $w$-ideal of $R$,
\item $xR+I^2$ is a $w$-reduction of $I$, and
\item if $I$ is $w$-basic, then $P \in \Max_t(R)$.
\end{enumerate}
\end{lemma}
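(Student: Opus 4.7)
The argument hinges on two observations. First, $xR$ is a $t$-ideal and minimal primes of $t$-ideals are $t$-ideals, so $P$ is a prime $t$-ideal of $R$. Hence $P\subseteq M$ for some $M\in\Max_t(R)$, and both $R_M$ (by the P$v$MD hypothesis) and the overring $R_P=(R_M)_{PR_M}$ are valuation domains. Second, since $P$ is minimal over $xR$, $PR_P$ is minimal over $xR_P$ in $R_P$, so $xR_P$ is $PR_P$-primary, and its contraction $I=xR_P\cap R$ is $P$-primary; in particular $\sqrt{I}=P$. Part (1) is then short: $I_w=\bigcap_{M\in\Max_t(R)}IR_M$, and for any $M\supseteq P$ in $\Max_t(R)$, the inclusion $R\setminus M\subseteq R\setminus P$ forces $IR_M\subseteq xR_P$ (since $a\in I\subseteq xR_P$ and $s\in R\setminus M$ give $1/s\in R_P$, hence $a/s\in xR_P$), whence $I_w\subseteq IR_M\cap R\subseteq xR_P\cap R=I$.

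For (2), I would use Kang's equality $t=w$ in P$v$MDs together with Lemma~\ref{l:square} to reduce the claim that $xR+I^2$ is a $w$-reduction of $I$ to the local identity $xIR_M+I^3R_M=I^2R_M$ at every $M\in\Max_t(R)$. The inclusion $\subseteq$ is immediate from $x\in I$. For the reverse, split into two cases. If $P\not\subseteq M$, then any prime of $R_M$ containing $IR_M$ would correspond to a prime of $R$ inside $M$ and containing $\sqrt{I}=P$, which is impossible; so $IR_M=R_M$ and both sides are $R_M$. If $P\subseteq M$, put $J:=IR_M=xR_P\cap R_M$ inside the valuation domain $R_M$, and prove the key identity $xJ=J^2$: the inclusion $xJ\subseteq J^2$ is automatic from $x\in J$, and the reverse follows by a valuation-theoretic estimate using the convex subgroup $\Delta$ attached to $PR_M$ (for $a,b\in J$, $b/x\in R_P$ combined with $v(ab/x)\geq v(x)-(\delta_1+\delta_2)$ for some $\delta_i\in\Delta_{\geq 0}$ forces $ab/x\in xR_P\cap R_M=J$). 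Then $xIR_M+I^3R_M=xJ+J^3=J^2=I^2R_M$.

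For (3), assume $I$ is $w$-basic, so by (1) and (2), $(xR+I^2)_w=I_w=I$. If $P\notin\Max_t(R)$, fix $M\in\Max_t(R)$ with $P\subsetneq M$ and localize at $M$; using that $w$-closure commutes with localization at maximal $t$-ideals, one gets $xR_M+I^2R_M=IR_M$, and substituting $I^2R_M=xJ\subseteq xR_M$ (from the $P\subseteq M$ case of (2)) collapses this to $J=xR_M$. But the convex subgroup $\Delta$ attached to $PR_M$ is nonzero (since $PR_M\subsetneq MR_M$), and choosing $\delta\in\Delta$ with $0<\delta<v(x)$ and $y\in R_M$ with $v(y)=v(x)-\delta$ produces $y\in xR_P\cap R_M=J$ with $y\notin xR_M$, contradicting $J=xR_M$. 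Hence $P\in\Max_t(R)$. The main technical obstacle throughout is the identity $xJ=J^2$ for the isolated component $J=xR_P\cap R_M$ inside the valuation overring $R_M$; once that valuation-theoretic computation is in hand, (1), (2), and (3) fit together by straightforward $w$-operation bookkeeping.
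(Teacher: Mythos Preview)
Your argument is correct, and its skeleton matches the paper's: both hinge on the local identity $I^2R_M=xIR_M$ (your $J^2=xJ$) at maximal $t$-ideals $M\supseteq P$, and both handle the case $M\nsupseteq P$ via the fact that $I$ is $P$-primary.  Two small points deserve a sentence of justification: the equality $IR_M=xR_P\cap R_M$ you invoke is not automatic (the paper proves it by writing $y=a/s=b/t$ and checking $b\in I$), and in your $J^2\subseteq xJ$ step the membership $ab/x\in R_M$ follows from your inequality only once you note that $v(x)>\Delta$, so $v(x)-(\delta_1+\delta_2)>0$.

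Where you genuinely diverge from the paper is in technique for parts (2) and (3).  The paper obtains $I^2R_M=xIR_M$ by an elementary element-chase: for $a,b\in I$ one writes $b=xc/s'$ with $s'\in R\setminus P$ and uses the valuation order on $R_M$ to see $ac/s'\in IR_M$.  You instead compute with the convex subgroup $\Delta$ attached to $PR_M$; this is a bit heavier but makes the structure transparent.  For (3), the paper avoids value-group analysis entirely via a neat trick: pick $y\in M\setminus P$, note that $y$ is a unit in $R_P$ so $I=yxR_P\cap R$, reapply (2) with $yx$ in place of $x$ to get $(yxR+I^2)_w=I$, and localize at $M$ to obtain $x\in IR_M\subseteq yxR_M$, contradicting $y\in M$.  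Your route---collapsing to $J=xR_M$ and then exhibiting an element of $J$ with strictly smaller value---is equally valid and arguably explains more directly \emph{why} height drops, at the cost of invoking the value-group/convex-subgroup dictionary.  Either way, the crux is the same identity $J^2=xJ$.
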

\begin{proof} (1)\,-\,(2) Let $M$ be a maximal $t$-ideal of $R$ containing $P$.  Then $I_w \subseteq IR_M \cap R \subseteq IR_P \cap R=I$, proving (1).  We next claim that $IR_M=IR_P \cap  R_M$.  To see this, suppose that $y \in IR_P \cap R_M$.  Then we may write $y=a/s=b/t$ with $a \in I$, $b \in R$, $s \in R \setminus P$ and $t \in R \setminus M$.  We then have $b=at/s \in IR_P \cap R=I$, and hence $y =b/t  \in IR_M$, as desired. Now, for $s \in R \setminus P$ and $a \in I$ (using the fact that $R_M$ is a valuation domain), it is clear that $a/s \in IR_P \cap R_M=IR_M$.  If we also have $b \in I$, then, writing $b=x/s'$ with $s' \in R \setminus P$, we obtain $ab=(a/s')x \in xIR_M$.  Thus $I^2R_M = xIR_M$, and it follows that $(xR+I^2)IR_M=xIR_M=I^2R_M$. (In particular, $I^2R_M \subseteq xR_M$; we use this below.)  Since $I$ is $P$-primary, we also have $(xR+I^2)IR_N=I^2R_N$ for $N \in \Max_t(R)$ with $N \nsupseteq P$.  Therefore, $((xR+I^2)I)_w=(I^2)_w$, and so $xR+I^2$ is a $w$-reduction of $I$.

(3) Assume that $I$ is $w$-basic; then $(xR+I^2)_w=I_w$ by (2). Suppose that $M \in \Max_t(R)$ properly contains $P$, and choose $y \in M \setminus P$.  Then $P$ is minimal over $yx$, and $I=yxR_P \cap R$.  Hence, as above, we have (using the parenthetical ``in particular'' comment above) $xR_M \in IR_M=(yxR+I^2)R_M\subseteq yxR_M$, a contradiction. Therefore, $P \in \Max_t(R)$.
\end{proof}

\begin{thm} \label{t:basic} A domain $R$ has the $w$-basic ideal property if and only if $R$ is a P$v$MD of $t$-dimension 1.
\end{thm}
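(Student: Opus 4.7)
The plan is to prove the two implications separately, in each case reducing to results already established in the paper together with Hays's characterization \cite[Theorem 6.1]{H1} combined with \cite[Theorem 10]{H2} of the basic ideal property for one-dimensional Pr\"ufer domains.

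For the forward direction, I would assume $R$ has the $w$-basic ideal property. The finite $w$-basic property then holds a fortiori, so Theorem~\ref{t:finitewbasic} gives that $R$ is a P$v$MD. To see that $t$-dim$(R)=1$, let $P$ be a nonzero prime $t$-ideal, pick $0 \neq x \in P$, and let $Q$ be a minimal prime of $xR$ contained in $P$. The ideal $I := xR_Q \cap R$ is $w$-basic by hypothesis, so Lemma~\ref{l:haysprop9}(3) forces $Q \in \Max_t(R)$; since $Q \subseteq P$ and $Q$ is already maximal among $t$-ideals, this forces $P = Q$, as required.

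For the backward direction, assume $R$ is a P$v$MD of $t$-dimension one. The crux is to show that for every $M \in \Max_t(R)$ the valuation domain $R_M$ is one-dimensional. This rests on the subsidiary fact that every prime $P$ of $R$ contained in a maximal $t$-ideal $M$ is itself a $t$-prime: for $a \in P_t$ one has $a \in F_v$ for some finitely generated $F \subseteq P$, and the identity $F_t R_M = F R_M$ (which follows from $(F_t R_M)_{t_{R_M}} = (F R_M)_{t_{R_M}}$ noted in the excerpt and the triviality of the $t$-operation on the valuation $R_M$) combined with $PR_M \cap R = P$ gives $a \in P$. With this in hand, the assumption $t$-dim$(R)=1$ forces every nonzero prime of $R_M$ to contract to $M$, so $\dim R_M = 1$. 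Now let $I$ be a nonzero ideal of $R$ and $J \subseteq I$ a $w$-reduction, so that $(JI^n)_w = (I^{n+1})_w$ for some $n$. Localizing at each $M \in \Max_t(R)$ and using the formula $(\cdot)_w R_M = (\cdot) R_M$ recalled in the introduction, the identity becomes $(JR_M)(IR_M)^n = (IR_M)^{n+1}$, exhibiting $JR_M$ as a reduction of $IR_M$ in the one-dimensional valuation domain $R_M$. Hays's theorem then gives $JR_M = IR_M$, and intersecting over $M \in \Max_t(R)$ yields $J_w = I_w$, so $I$ is $w$-basic.

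The main technical obstacle is the subsidiary claim that primes lying under a maximal $t$-ideal in a P$v$MD are themselves $t$-primes: this is precisely the bridge from the global hypothesis ``$t$-dim$(R)=1$'' to the local hypothesis ``$\dim R_M = 1$'' needed in order to invoke Hays's classical theorem locally. Once this is verified, the rest of each direction is essentially bookkeeping with the standard interplay between the $w$-operation and localization at maximal $t$-ideals.
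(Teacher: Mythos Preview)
Your proof is correct, and the forward direction is essentially identical to the paper's. The backward direction differs in the technical device used once you are inside $R_M$. The paper first invokes Lemma~\ref{l:square} to reduce to the case $(JI)_w=(I^{2})_w$, then argues directly in the one-dimensional valuation domain $R_M$ via the trace identity \cite[Proposition~2.1]{FHP} (namely $IR_M(R_M:IR_M)=MR_M$ when $IR_M$ is not invertible), multiplying through to obtain $JR_M=IR_M$. You instead localize the general reduction equation and cite Hays's theorem for one-dimensional Pr\"ufer domains as a black box. Your route is slightly shorter and more modular; the paper's is more self-contained, effectively re-deriving the needed local instance of Hays's result from the trace property. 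One further point: your verification that every prime contained in a maximal $t$-ideal of a P$v$MD is itself a $t$-prime is correct and is precisely the passage from $t$-$\dim(R)=1$ to $\dim R_M=1$; the paper simply asserts ``$R_M$ is also one-dimensional'' without spelling this out.
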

\begin{proof} Let $R$ be a P$v$MD with $t$-$\dim(R)=1$, and let $J \subseteq I$ be a nonzero ideals of $R$ with $(JI)_w = (I^2)_w$.  Let $M$ be a maximal $t$-ideal of $R$.  Then $JIR_M=I^2R_M$.  We wish to show that $JR_M=IR_M$, and for this we may as well assume that $I \subseteq M$ and $IR_M$ is not invertible.  Since $R_M$ is a valuation domain, we then have $IR_M=IMR_M$, and since $R_M$ is also one-dimensional, \cite[Proposition 2.1]{FHP} yields $IR_M(R_M:IR_M)=MR_M$.  Hence multiplying both sides of the equation $JIR_M=I^2R_M$ by $(R_M:IR_M)$ yields $JR_M \supseteq JMR_M=IMR_M=IR_M$.  We then obtain $J_w=I_w$.  Therefore, by Lemma~\ref{l:square}, $R$ has the $w$-basic ideal property.

Conversely, suppose that $R$ has the $w$-basic ideal property.  Then $R$ is a P$v$MD by Theorem~\ref{t:finitewbasic}. Let $M$ be a maximal $t$-ideal of $R$, let $Q$ be a nonzero prime of $R$ contained in $M$, let $x$ be a nonzero element of $Q$, and shrink $Q$ to a prime $P$ minimal over $x$. Then, since $I:=xR_P \cap R$ is $w$-basic by hypothesis, Lemma~\ref{l:haysprop9} yields $P=Q=M$.  Therefore, $\htt M=1$, as desired.
\end{proof}

%%%%%%%%%%%%%%%%%%%%%%%%%%%%%%%%%%%%%%%%%%%%%%%%%%%%%%%
%%%%%%%%%%%%%%%%%%%%%%%%%%%%%%%%%%%%%%%%%%%%%%%%%%%%%%%
%%%%%%%%%%%%%%%%%%%                     SECTION III          %%%%%%%%%%%%%%%%%%%%
%%%%%%%%%%%%%%%%%%%%%%%%%%%%%%%%%%%%%%%%%%%%%%%%%%%%%%%
%%%%%%%%%%%%%%%%%%%%%%%%%%%%%%%%%%%%%%%%%%%%%%%%%%%%%%%%

\section{Examples.} \label{s:examples}

Consider the following diagram of implications involving various $\st$-basic properties.

%%%%%%%%%%%%%%%%%%%%%%%%%%%%%%%%%%%%%%%%%%%%%%%%%%%%%
%%%%%%%%%%%%%%%%                      FIGURE              %%%%%%%%%%%%%%%%%%%%%%
%%%%%%%%%%%%%%%%%%%%%%%%%%%%%%%%%%%%%%%%%%%%%%%%%%%%%

\begin{figure}[h!]
\begin{tikzpicture} [xscale = 1.3,yscale = 1.3]
\draw [fill] (0,-.03) circle [radius = .03];\draw  [->] (0,0) -- (0,-.97);
\node [right] at (0,-.02) {\footnotesize{Krull}};
\draw [fill] (0,-1) circle [radius = .03]; \draw [->] (0,-1) -- (0,-1.97);
\node[right] at (0,-.99) {\footnotesize{\bc $w$-basic} = P$v$MD + $t$-dim 1};
\draw [fill] (0,-2) circle [radius = .03]; \draw [->] (0,-2) -- (0,-2.97);
\node[right] at (0,-1.99) {\footnotesize{\bc finite $w$-basic} = P$v$MD};
\draw [fill] (0,-3) circle [radius = .03]; \draw [->] (0,-3) -- (0,-3.97);
\node[right] at (0,-2.99) {\footnotesize{$v$-domain}};
\draw [fill] (0,-4) circle [radius = .03]; \draw [->] (0,-4) -- (0,-4.97);
\node[right] at (0,-3.99) {\footnotesize{\bc finite $v$-basic}};
\draw [fill] (0,-5) circle [radius = .03]; \draw [->] (0,-5) -- (0,-5.97);
\node[right] at (0,-4.99) {\footnotesize{\bc finite $t$-basic}};
\node[right] at (0,-5.99) {\footnotesize{integrally closed}};
\draw [fill] (0,-6) circle [radius = .03];
\draw [->] (0,-.99) -- (-1.5,-1.37); \draw[fill] (-1.5,-1.4) circle [radius = .03];
\draw[->] (-1.5,-1.4) -- (-1.5,-2.47); \draw[fill] (-1.5,-2.5) circle [radius = .03];
\draw[->] (-1.5,-2.5) -- (0,-2.99);
\node[left] at (-1.5,-1.37) {\footnotesize{\bc $t$-basic}};
\node[left] at (-1.5,-2.47) {\footnotesize{ \bc $v$-basic} = completely};
\node[left] at (-1.5,-2.77) {\footnotesize{integrally closed}};
\node[right] at (0,-.5) {\footnotesize{(1)}};
\node[right] at (0,-1.5) {\footnotesize{(2)}};
\node[right] at (0,-2.5) {\footnotesize{(3)}};
\node[right] at (0,-3.5) {\footnotesize{(4)}};
\node[right] at (0,-4.5) {\footnotesize{(5)}};
\node[right] at (0,-5.5) {\footnotesize{(6)}};
\node[left] at (-1.46,-1.95) {\footnotesize{(8)}};
\node[above right] at (-.95,-1.25) {\footnotesize{(7)}};
\node[below] at (-.75,-2.7) {\footnotesize{(9)}};

\end{tikzpicture}
\caption{$\star$-basic properties in perspective}\label{D2}
\end{figure}
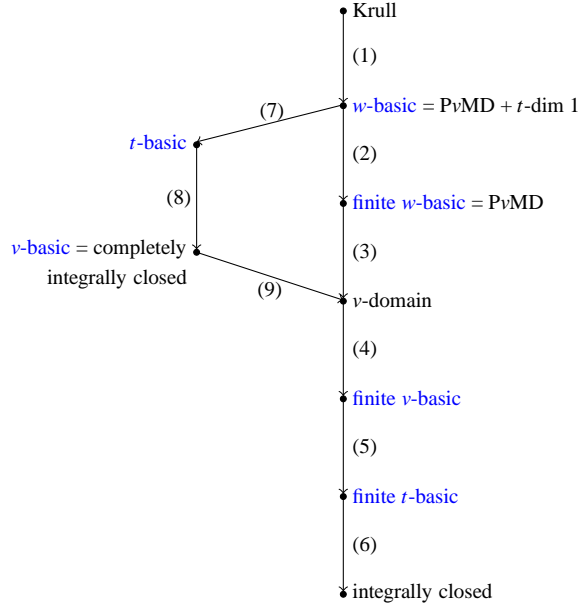

Of these implications, (1)-(3) and (9) are well known.  Implications (4)-(8) follow from Proposition~\ref{p:vdomain}, Proposition~\ref{p:vimpliest}, Lemma~\ref{l:starbasic}, Theorem~\ref{t:basic} (and the fact that $w=t$ in a P$v$MD), and Proposition~\ref{p:cic}, respectively.

Irreversibility of arrows (1)-(3) and (9) is again well known.  We do not know whether (5) is reversible.  The remainder of the paper is devoted to examples for (irreversibility of) the other implications.

\begin{example} Arrow (4) is irreversible.
\end{example}
\begin{proof} Let $k$ be a field and $X, Y, Z$ indeterminates over $k$. Let $T:=k((X))+M$ and $R:=k[[X]]+M$, where $M:=(Y,Z)k((X))[[Y, Z]]$. Let $A$ be an ideal of $R$.  Then $A$ is comparable to $M$.  Suppose $A \subseteq M$ and $A$ is not invertible.  If $AA^{-1} \supsetneq M$, then $AA^{-1}$ is principal, and hence $A$ is invertible, contrary to assumption. Hence $AA^{-1} \subseteq M$. We claim that $(AA^{-1})_v=M$.  To verify this, first recall that $M$ is divisorial in $R$.  Then, since $AA^{-1}$ is a trace ideal, that is, $(AA^{-1})^{-1}=(AA^{-1}:AA^{-1})$,  we have $(AA^{-1})^{-1} \subseteq (AA^{-1}T:AA^{-1}T)=T =M^{-1}$ (the first equality holding since $T$ is Noetherian and integrally closed).  This forces $(AA^{-1})^{-1}=M^{-1}$, whence $(AA^{-1})_v=M_v=M$, as claimed.  Now let $I$ be a finitely generated ideal of $R$ and $J$ a $v$-reduction of $I$, so that $(JI^n)_v=(I^{n+1})_v$ for some positive integer $n$.  We shall show that $J^{-1}=I^{-1}$ (and hence that $J_v=I_v$), and for this we may assume that $I$ is not invertible.  Suppose, by way of contradiction, that $IT(T:IT)=T$, i.e., that $IT$ is invertible in $T$.  Then, since $T$ is local, $IT$ is principal and, in fact, $IT=aT$ for some $a \in I$.  We then have $R \subseteq a^{-1}I \subseteq T$.  Then $k[[X]] \cong R/M \subseteq a^{-1}I/M \subseteq T/M \cong k((X))$, from which it follows that $a^{-1}I/M$ must be a cyclic $k[[X]]$-module.  However, this is easily seen to imply that $a^{-1}I$, hence $I$, is principal, the desired contradiction.  We therefore have $(T:IT)I \subseteq M$, whence $$(IM)^{-1}=(R:IM)=((R:M):I) = (T:I)=(M:I)\subseteq I^{-1}.$$ This immediately yields $I^{-1}=(IM)^{-1}$.

Now set $Q=I^n(I^n)^{-1}$. From above (setting $A=I^n$), we have $Q_v=M$.  Therefore, $$I^{-1}\subseteq J^{-1}\subseteq (JM)^{-1}=(JQ)^{-1}=(IQ)^{-1}=(IM)^{-1}=I^{-1},$$ which yields $J^{-1}=I^{-1}$, as desired.  Hence $R$ has the finite $v$-basic property.  Finally, again from above, we have $((y,z)(y,z)^{-1})_v = M$, so that $R$ is not a $v$-domain.

\end{proof}

\begin{example} Arrow (6) is irreversible.
\end{example}
\begin{proof} Let $k$ be a field and $X, Y$ indeterminates over $k$. Let $V=k(X)[[Y]]$ and $R=k+M$, where $M=Yk(X)[[Y]]$. Clearly, $R$ is an integrally closed domain. Of course, $M$ is divisorial in $R$.  Also, $(M^2)^{-1}=((R:M):M)=(V:M)=Y^{-1}V$, and so $(M^2)_v=(R:Y^{-1}V)=Y(R:V)=YM=M^2$, i.e., $M^2$ is also divisorial. We claim that $R$ does not have the finite $t$-basic ideal property. Indeed, let $W:=k+Xk$ and consider the finitely generated ideal $I$ of $R$ given by  $I=Y(W+M)$. We have $(k:W)=(0)$; otherwise, we have $0\not=f\in (k:W)$, and both $f$ and $fX\in k$, whence $X\in k$, a contradiction.   Therefore, $I^{-1}=Y^{-1}M$ and thus $I_{t}=I_{v}=YM^{-1}=M$. Now, let $J=YR$. Then $J_t=YR \subsetneq M=I_t$.  However, $$(JI)_{t}=(YI)_{t}=YI_{t}=YM=M^{2}=((I_{t})^{2})_t=(I^2)_t,$$ and so $R$ does not have the finite $t$-basic ideal property.
\end{proof}

\begin{example} \label{e:hoexample} Arrow (7) is irreversible.
\end{example}
\begin{proof} In \cite{ho} Heinzer and Ohm give an example of an essential domain that is not a P$v$MD. In that example, $k$ is a field, $y$, $z$, and $\{x_i\}_{i=1}^{\infty}$ are indeterminates over $k$, and $D=R \cap (\bigcap_{i=1}^{\infty} V_i)$, where $R=k(\{x_i\})[y,z]_{(y,z)k(\{x_i\})[y,z]}$ and $V_i$ is the rank-one discrete valuation ring on $k(\{x_j\}_{j=1}^{\infty},y,z)$ with $x_i,y,z$ all having value 1 and $x_j $ having value $0$ for $j \ne i$ (using the ``infimum'' valuation).  As further described in 
\cite[Example 2.1]{MZ}, we have $\Max(D)=\{M\} \cup \{P_i\}$, where $M$ is the contraction of $(y,z)R$ to $D$ and the $P_i$ are the centers of the maximal ideals of the $V_i$; moreover, $D_M=R$ and $V_i=D_{P_i}$.

It was pointed out in \cite[Example 1.7]{ghl} that each finitely generated ideal of $D$ is contained in almost all of the $V_i$.  If fact, one can say more.  Let $a$ be an element of $D$.  We may represent $a$ as a quotient $f/g$ with $f,g \in T:=k[\{x_i\},y,z]_{(y,z)k[\{x_i\},y,z]}$ and $g \notin (y,z)T$ (and hence $g \notin M$). Since $f$ and $g$ involve only finitely many $x_j$ and $g \notin M$, the sequence $\{v_i(a)\}$ must be eventually constant, where $v_i$ is  the valuation corresponding to $V_i$.  We denote this constant value by $w(a)$.  A similar statement holds for finitely generated ideals of $D$.

Let $K$ be a nonzero ideal of $D$. Then $$K_tD_{P_i} \supseteq KD_{P_i} = (KD_{P_i})_{t_{D_{P_i}}} = (K_tD_{P_i})_{t_{D_{P_i}}} \supseteq K_tD_{P_i},$$ whence $K_tD_{P_i}=KD_{P_i}$.

Now suppose that we have nonzero ideals $J \subseteq I$ of $D$ with $(JI^n)_t=(I^{n+1})_t$.  Let $a \in I$, and choose $a_0 \in I$ so that $w(a_0)$ is minimal.  Then $aa_0^n \in I^{n+1} \subseteq (JI^n)_t$, and so $aa_0^n \in (BA^n)_v$ for finitely generated ideals $B \subseteq J$ and $A \subseteq I$.  With the observation in the preceding paragraph, we then have $aa_0^n \in BA^nD_{P_i}$ for each $i$.  However, since $w(a_0) \le w(A)$, it must be the case that $w(a) \ge w(B)$; i.e., for some integer $k$, $a \in BD_{P_i}$ for all $i >k$.  Since the equality $(JI^n)_t=(I^{n+1})_t$ yields $JD_{P_i}=ID_{P_i}$ for each $i$, we may choose elements $b_j \in J$ for which $v_j(a)=v_j(b_j)$, $j=1, \ldots,k$.  With $B'=(B, b_1, \ldots, b_k)$, we then have $a \in B'D_{P_i}$ for each $i$.  This yields $a(B')^{-1} \subseteq \bigcap D_{P_i}$.

Next, we consider extensions to $D_M$.  From $(JI^n)_t=(I^{n+1})_t$, we obtain $(JI^nD_M)_{t_{D_M}}=(I^{n+1}D_M)_{t_{D_M}}$.  Since $D_M$ is a regular local ring, each nonzero ideal of $D_M$ is $t$-invertible, and we may cancel to obtain $(ID_M)_{t_{D_M}}=(JD_M)_{t_{D_M}}$.  There is a finitely generated subideal $B_1$ of $J$ with $B_1D_M=JD_M$.  We then have $$IB_1^{-1} \subseteq ID_M  B_1^{-1}D_M=ID_M (B_1D_M)^{-1}\subseteq (JD_M(JD_M)^{-1})_{t_{D_M}} \subseteq D_M.$$

Now let $B_2=B'+B_1$.  Then $a(B_2)^{-1} \subseteq D_M \cap \bigcap D_{P_i}=D$, whence $a \in (B_2)_v \subseteq J_t$.  It follows that $D$ has the $t$-basic property.  However, since $D$ is not a P$v$MD, $D$ cannot have the (finite) $w$-basic property.
\end{proof}

\begin{example} \label{e:entire} Arrow (8) is irreversible.
\end{example}
\begin{proof} Let $D$ denote the ring of entire functions.  It is well known that $D$ is a completely integrally closed Pr\"ufer domain of infinite Krull dimension.  Since $D$ is a Pr\"ufer domain, each nonzero ideal is a $t$-ideal.  The fact that $\dim D = \infty$ then yields that $D$ does not have the ($t$-) basic property by \cite[Theorem 10]{H2}.
\end{proof}

\end{document}